\newtheorem{thm}{Theorem}[section]
\newtheorem*{thm*}{Theorem}
\newtheorem{lem}[thm]{Lemma}
\newtheorem{prop}[thm]{Proposition}
\newtheorem{cor}[thm]{Corollary}
\newtheorem*{conj*}{Conjecture}
\newtheorem*{question*}{Question}
\theoremstyle{remark}
\newtheorem{rem}[thm]{Remark}
\newtheorem{ex}[thm]{Example}
\theoremstyle{definition}
\newtheorem{defn}[thm]{Definition}
\newcommand{\prdim}{\opname{pr.dim}\nolimits}
\newcommand{\opname}[1]{\operatorname{\mathsf{#1}}}
\newcommand{\rk}{\opname{rk}\nolimits}
\newcommand{\thick}{\opname{thick}\nolimits}
\DeclareMathOperator{\Coh}{\mathsf{coh}}
\newcommand{\Z}{\mathbb{Z}}
\newcommand{\C}{\mathbb{C}}
\renewcommand{\P}{\mathbb{P}}
\newcommand{\Hom}{\opname{Hom}}
\newcommand{\End}{\opname{End}}
\newcommand{\Ext}{\opname{Ext}}
\newcommand{\ca}{{\mathcal A}}
\newcommand{\cd}{{\mathcal D}}
\newcommand{\cp}{{\mathcal P}}
\newcommand{\ct}{{\mathcal T}}
\newcommand{\cu}{{\mathcal U}}
\numberwithin{equation}{section}
\begin{document}

\title[A finite dimensional algebra with a phantom]{A finite dimensional algebra with a phantom \\ (a corollary of an example by J.~Krah)}

\author{Martin Kalck}
\email{martin.kalck@math.uni-freiburg.de}

\thanks{The observations in this note can be seen as an appendix to Krah's paper \cite{Krah}. I thank Henning Krause for encouraging me to submit this note to the arXiv. I was supported by the DFG grant KA4049/2-1.}

\begin{abstract}
We observe that there exists an associative finite dimensional $\C$-algebra $A$ of finite global dimension, such that the bounded derived category $D^b(A)$ of finite dimensional $A$-modules admits an admissible subcategory $\cp$ with vanishing Grothendieck group $K_0(\cp)$. In other words, $\cp \subseteq D^b(A)$ is a phantom. 

Using tilting theory, this follows directly from a very recent example of a phantom for a smooth rational surface due to Krah.

By work of Aihara \& Iyama, this also leads to new examples of presilting objects that cannot be completed to silting objects.
\end{abstract}

\maketitle

\section{Phantoms}
We first recall the main result of Krah \cite[Theorem 1.1.]{Krah}.

\begin{thm}\label{T:Krah}
Let $X$ be the blow-up of $\P^2_\C$ in $10$ closed points in general position.

Then there exists an exceptional sequence $(E_1, \ldots, E_{13})$ in $D^b(\Coh X)$ which is \emph{not} full.
\end{thm}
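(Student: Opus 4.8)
The plan is to reduce the statement to a single genuinely categorical assertion and then to confront the one hard step. Since $X$ is the blow-up of $\P^2_\C$ in $10$ points, one has $\rank K_0(X) = 13$ and $K_X^2 = 9 - 10 = -1$, so $X$ lies just beyond the (weak) del Pezzo range. Any exceptional sequence of length $13$ is automatically of maximal length, and I would first record the lattice-theoretic fact that forces the numerical triviality of the problem: in the basis given by an exceptional sequence the Gram matrix of the Euler pairing $\chi(E_i,E_j) = \sum_k (-1)^k \dim \Ext^k(E_i,E_j)$ is upper-triangular and unipotent, hence of determinant $1$. Because $X$ admits a full exceptional collection (Orlov's blow-up formula), the Euler pairing on $K_0(X)$ is itself unimodular, so a full-rank sublattice on which the form remains unimodular has index $1$; thus the classes $[E_1], \ldots, [E_{13}]$ necessarily form a $\Z$-basis of $K_0(X)$. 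Given a semiorthogonal decomposition $D^b(\Coh X) = \langle \cp, \langle E_1, \ldots, E_{13}\rangle\rangle$, this forces $K_0(\cp) = 0$ for the orthogonal complement $\cp$. Consequently no $K$-theoretic (nor, by a parallel count, Hochschild-homological) invariant can distinguish a non-full sequence from a full one, and ``not full'' becomes the purely categorical statement that $\cp \neq 0$ despite $K_0(\cp) = 0$.

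Second, I would construct the candidate sequence explicitly out of line bundles, writing $\opname{Pic}(X) = \Z H \oplus \bigoplus_{i=1}^{10} \Z e_i$ with $H$ the pullback of a line and $e_i$ the exceptional divisors. Every line bundle is exceptional, so the only conditions to verify are the semiorthogonality vanishings $H^\bullet(X, \co(D_j - D_i)) = 0$ for $i > j$, where $E_i = \co(D_i)$. I would reduce each such vanishing to a Riemann--Roch computation pinning down $\chi$, together with Serre duality and standard vanishing, using crucially that the $10$ points are in general position so that the relevant linear systems carry no unexpected sections or higher cohomology. Finding $13$ divisor classes satisfying all $\binom{13}{2}$ vanishings is a finite but delicate search; I expect this to be laborious yet in principle mechanical.

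The heart of the argument, and the step I expect to be by far the hardest, is proving that this sequence is \emph{not} full. As noted, every numerical obstruction vanishes by design, and mutations preserve both fullness and length, so one cannot produce the example by mutating the standard full collection; the non-fullness must be detected by a non-numerical argument. My plan would be to exhibit a single nonzero object $F \in \cp$, that is, an $F$ with $\Ext^\bullet(E_i, F) = 0$ for every $i$ and in every cohomological degree, or equivalently to assume fullness and derive a contradiction: if the $\co(D_i)$ generated $D^b(\Coh X)$, then a well-chosen test object (a structure sheaf of a point, or a line bundle cohomologically orthogonal to all the $E_i$) would admit a finite resolution by the $\co(D_i)$, and I would try to show the resulting cohomological pattern is impossible.

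The main obstacle is exactly this verification. Because the phantom $\cp$ is invisible to $K_0$ and to Hochschild homology, certifying that a prospective $F$ is simultaneously nonzero and totally orthogonal requires controlling the entire complex $\Ext^\bullet(E_i, F)$ for all $13$ objects and all degrees at once, and this is where the generic configuration of the $10$ points — equivalently the failure of $X$ to be del Pezzo, visible in $K_X^2 = -1$ — must enter essentially. I would expect the decisive input to be a careful cohomological analysis of all twists of the structure sheaf on this specific rational surface, rather than any formal or lattice-theoretic device.
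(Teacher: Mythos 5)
This theorem is not proved in the note you are reading at all: it is quoted verbatim from \cite[Theorem 1.1.]{Krah}, so the only proof to compare against is Krah's own. With that said, your proposal correctly reconstructs the expected outline, and much of what you say is accurate. Your first paragraph (unipotent upper-triangular Gram matrix, unimodularity of the Euler form on $K_0(X)$, hence the thirteen classes form a $\Z$-basis and the orthogonal complement has vanishing Grothendieck group) is correct lattice theory --- but note that it proves the paper's Corollary \emph{assuming} the Theorem; it is not progress toward the Theorem itself. Your second paragraph (a collection of thirteen line bundles on the blow-up, exceptionality and semiorthogonality checked via Riemann--Roch, Serre duality, and the general-position hypothesis) has the right shape, and matches the spirit of Krah's construction, but you never exhibit the divisor classes $D_1,\ldots,D_{13}$, so none of the $\binom{13}{2}$ vanishings is actually verified; this part of your plan is a placeholder, not an argument.

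The genuine gap is the step you yourself flag as the heart of the matter: non-fullness. Everything you offer there is conditional --- ``I would try to exhibit a nonzero $F \in \cp$,'' ``I would try to show the resulting cohomological pattern is impossible'' --- and no mechanism is proposed for certifying that a candidate object is simultaneously nonzero and orthogonal to all thirteen bundles in all degrees. As you correctly observe, no numerical device can do this: $K_0$, Hochschild homology, and mutation-theoretic bookkeeping are all blind to the complement by design, so the contradiction must come from an honest geometric/categorical argument, and that argument is precisely where Krah's paper does its real work. A plan that stops at ``derive a contradiction from a well-chosen test object'' has located the difficulty but not crossed it; since this step \emph{is} the mathematical content of the statement (constructing exceptional sequences of maximal length is comparatively routine), the proposal as written is a map of the problem rather than a proof of it.
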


Since $K_0(X)\cong \Z^{13}$, he obtains the following, cf.~\cite[Corollary 5.1.]{Krah}.

\begin{cor}
The admissible subcategory $\ca=\langle E_1, \ldots, E_{13} \rangle^\perp$ of $D^b(\Coh X)$ has vanishing Grothendieck group. In other words, $\ca \subseteq D^b(\Coh X)$ is a phantom subcategory.
\end{cor}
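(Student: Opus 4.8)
The plan is to deduce the vanishing of $K_0(\ca)$ from the semiorthogonal decomposition furnished by Theorem~\ref{T:Krah} together with the additivity of Grothendieck groups on such decompositions. First I would record that an exceptional sequence $(E_1, \ldots, E_{13})$ generates an admissible subcategory $\cb := \langle E_1, \ldots, E_{13}\rangle$ of $D^b(\Coh X)$, and that admissibility yields a semiorthogonal decomposition $D^b(\Coh X) = \langle \ca, \cb\rangle$ with $\ca = \cb^\perp$. The key homological input is that for a triangulated category $\ct$ admitting a semiorthogonal decomposition $\ct = \langle \ca, \cb\rangle$, the Grothendieck group splits as $K_0(\ct) \cong K_0(\ca) \oplus K_0(\cb)$; I would cite this as the standard additivity theorem for $K_0$ under semiorthogonal decompositions.

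Next I would compute the two known pieces. Since each $E_i$ is exceptional, its endomorphism algebra is $\C$ in degree zero, so $\cb \simeq D^b(\C)$ as a thirteen-fold orthogonal gluing, whence $K_0(\cb) \cong \Z^{13}$, the classes $[E_1], \ldots, [E_{13}]$ forming a basis. On the other side, $K_0(D^b(\Coh X)) \cong K_0(X) \cong \Z^{13}$, which holds because $X$ is a smooth rational surface (an iterated blow-up of $\P^2_\C$) and the Grothendieck group of a blow-up at a point of a smooth projective surface increases rank by one; starting from $K_0(\P^2) \cong \Z^3$ and blowing up ten points gives rank $3 + 10 = 13$. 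Plugging these into the additivity isomorphism gives $\Z^{13} \cong K_0(\ca) \oplus \Z^{13}$.

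The decisive step is then to conclude $K_0(\ca) = 0$ from this relation. Here the subtlety — and the main obstacle — is that an abstract isomorphism $\Z^{13} \cong K_0(\ca) \oplus \Z^{13}$ does \emph{not} by itself force $K_0(\ca) = 0$, since $\Z^{13}$ is not cohopfian as an abstract group in the presence of a possibly infinitely generated or torsion summand. To close this gap cleanly I would instead argue directly: the additivity isomorphism is realized by the \emph{classes} of objects, so the inclusion $\cb \hookrightarrow D^b(\Coh X)$ induces a map $\Z^{13} = K_0(\cb) \to K_0(X) = \Z^{13}$ sending the exceptional classes $[E_i]$ to their images in $K_0(X)$. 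Thus I would verify that the thirteen exceptional classes $[E_1], \ldots, [E_{13}]$ are in fact a $\Z$-basis of $K_0(X)$, equivalently that the inclusion $K_0(\cb) \to K_0(X)$ is surjective (it is automatically split injective by the decomposition). Surjectivity of this map is precisely the statement that $K_0(\ca)$, the complementary summand, vanishes.

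The crux therefore reduces to showing $[E_1], \ldots, [E_{13}]$ span $K_0(X) \cong \Z^{13}$ over $\Z$. I expect this to be the genuine content and the step Krah's framework supplies: although the sequence is \emph{not} full as a generating set of the triangulated category (that is the whole point of Theorem~\ref{T:Krah}), the \emph{$K$-theoretic} classes can still form a basis, because an exceptional sequence of maximal length is automatically a semiorthogonal basis at the level of $K_0$ — the Gram matrix of the Euler pairing $\chi(E_i, E_j)$ is upper-triangular with $1$'s on the diagonal, hence unimodular, forcing the $[E_i]$ to be a $\Z$-basis whenever their number equals $\rk K_0(X)$. Once this unimodularity is recorded, surjectivity of $K_0(\cb) \to K_0(X)$ follows, and with it $K_0(\ca) = 0$, so that $\ca$ is nonzero (it contains the objects witnessing non-fullness in Theorem~\ref{T:Krah}) yet has trivial Grothendieck group, i.e.\ it is a phantom.
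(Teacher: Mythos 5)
Your proof is correct and, for most of its length, is exactly the paper's (implicit) argument: the paper deduces the corollary from additivity of $K_0$ under the semiorthogonal decomposition $D^b(\Coh X) = \langle \ca, E_1, \ldots, E_{13} \rangle$ together with the computation $K_0(X) \cong \Z^{13}$, citing Krah's Corollary 5.1. Where you diverge is the claim that the resulting isomorphism $K_0(\ca) \oplus \Z^{13} \cong \Z^{13}$ ``does not by itself force $K_0(\ca) = 0$.'' That claim is false, and your detour through unimodularity of the Euler pairing is patching a gap that is not there. Indeed, the isomorphism exhibits $K_0(\ca)$ as isomorphic to a direct summand, hence to a subgroup, of $\Z^{13}$; every subgroup of a finitely generated free abelian group is free of finite rank, so $K_0(\ca) \cong \Z^r$ for some $r \leq 13$, and then $\Z^{r+13} \cong \Z^{13}$ forces $r = 0$ by invariance of rank. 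In particular the scenarios you worry about -- an infinitely generated or torsion summand -- cannot occur, and cohopfian-ness is beside the point: one is not invoking cancellation for arbitrary abelian groups, but the fact that the unknown summand is a priori constrained to be free of finite rank. Your alternative ending -- the Gram matrix $(\chi(E_i, E_j))$ is upper triangular with $1$'s on the diagonal, hence unimodular, so $K_0(\cb) \to K_0(X)$ is split surjective and the $[E_i]$ form a $\Z$-basis of $K_0(X)$ -- is valid and yields slightly more information (an explicit basis of $K_0(X)$), but it is not needed to close the argument; the rank count you dismissed already suffices, and is what the paper relies on.
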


Next, we recall the following Theorem of Hille \& Perling \cite{HillePerling}.

\begin{thm} \label{T:HP}
Let $S$ be a smooth projective rational surface over an algebraically closed field. Then there is a tilting bundle on $S$.
\end{thm}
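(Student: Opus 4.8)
The plan is to combine the classification of smooth projective rational surfaces with Orlov's semiorthogonal decomposition for blow-ups. Recall that any such surface $S$ admits a birational morphism $\sigma \colon S \ra S_0$ onto a minimal model $S_0$, which is either $\P^2$ or a Hirzebruch surface $\F_n$, and that $\sigma$ factors as a finite sequence of blow-ups at closed points. So first I would settle the base cases, and then propagate a tilting bundle up the resulting tower of blow-ups, taking care at each stage to remain inside the subcategory of locally free sheaves.

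For the base cases I would exhibit explicit tilting bundles by hand. On $\P^2$ one takes Beilinson's bundle $\co \oplus \co(1) \oplus \co(2)$. On a Hirzebruch surface $\F_n = \P(\co_{\P^1} \oplus \co_{\P^1}(n))$, viewed as a $\P^1$-bundle $p \colon \F_n \ra \P^1$, one takes the relative Beilinson summands $\co \oplus \co(1)$ (with $\co(1)$ the relative hyperplane bundle) tensored with the pullbacks $p^*\co_{\P^1}$ and $p^*\co_{\P^1}(1)$, giving a strongly exceptional collection of four line bundles whose direct sum is a tilting bundle. In both cases the vanishing of higher $\Ext$-groups reduces to standard cohomology computations on $\P^2$ and on $\P^1$.

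For the inductive step, let $\pi \colon \tilde S \ra S$ be the blow-up of a smooth point, with exceptional divisor $E \cong \P^1$, and assume $T$ is a tilting bundle on $S$. Orlov's blow-up formula yields a semiorthogonal decomposition $D^b(\Coh \tilde S) = \langle \co_E(-1), \pi^* D^b(\Coh S) \rangle$, in which $\pi^* T$ is a tilting object generating the second component. Hence $\pi^* T \oplus \co_E(-1)$ classically generates $D^b(\Coh \tilde S)$ and has no higher self-extensions; the difficulty is that the summand $\co_E(-1)$ is supported on $E$ and is not locally free. I would therefore replace it by an honest vector bundle: twisting the summands of $\pi^* T$ by suitable powers of $\co_{\tilde S}(-E)$ and adjoining a line bundle that accounts cohomologically for the $E$-direction, so that every summand becomes a vector bundle while the strong exceptionality and the generation of $D^b(\Coh \tilde S)$ are preserved.

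The main obstacle is precisely this last point: keeping all summands locally free while maintaining $\Ext^{>0}$-vanishing through every blow-up. Concretely this amounts to controlling the cohomology $H^\bullet(\tilde S, \cl \otimes (\cl')^{\vee})$ of differences of the chosen line bundles after each step, and then iterating this control along the (possibly long) chain of blow-ups. Obtaining a uniform combinatorial handle on these vanishing conditions — governed by the intersection form on $S$ and the configuration of the exceptional curves — is where the argument becomes genuinely delicate rather than formal, and is the technical heart of the theorem.
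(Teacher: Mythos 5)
First, a point of comparison: the paper does not prove this statement at all --- it is quoted as a theorem of Hille and Perling \cite{HillePerling} and used as a black box --- so your proposal must be measured against the argument in \cite{HillePerling} itself, and there it diverges in an essential way. Your plan contains a step that fails, and it is precisely the step you defer as ``delicate''. There are two problems. The local one: after Orlov's decomposition $D^b(\Coh \tilde S)=\langle \co_E(-1), \pi^*D^b(\Coh S)\rangle$, the object $\pi^*T\oplus\co_E(-1)$ does \emph{not} have vanishing higher self-extensions. Semiorthogonality only kills $\Hom$'s in one direction, namely $\Hom(\pi^*D^b(\Coh S),\co_E(-1)[i])=0$; in the other direction Serre duality gives
$\Ext^1_{\tilde S}(\co_E(-1),\co_{\tilde S})\cong H^1(\tilde S,\co_E(-1)\otimes\omega_{\tilde S})^*\cong H^1(E,\co_E(-2))^*\cong\C\neq 0$,
and $\co_{\tilde S}$ is a summand of $\pi^*T$ in your induction. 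So the object you propose to ``repair'' is not tilting to begin with; the defect is not merely that $\co_E(-1)$ fails to be locally free. The structural, fatal problem: your base cases are direct sums of line bundles on $\P^2$ and $\F_n$, and your inductive repair only twists existing summands by line bundles and adjoins one further line bundle. If this scheme worked, it would produce on \emph{every} smooth rational surface a tilting bundle that is a direct sum of line bundles --- equivalently (the ordering being automatic for line bundles on a rational surface) a full strong exceptional collection of line bundles. This is false: Hille and Perling themselves constructed a smooth toric, hence rational, surface admitting no full strong exceptional collection of line bundles (their counterexample to King's conjecture, Compos.~Math.~142 (2006)). So no choice of twists by powers of $\co_{\tilde S}(-E)$ and adjoined line bundles can make the induction close up in general; summands of higher rank are unavoidable.

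This is also why the actual proof in \cite{HillePerling} is organized differently. The induction over blow-ups is run at the level of full exceptional sequences of line bundles that are \emph{not} required to be strong (via augmentation of toric systems), arranging only that the relevant $\Ext^2$-groups vanish; then, in a second step, the line bundles are replaced by \emph{universal extensions}, which are genuinely higher-rank bundles killing the offending $\Ext^1$'s while preserving generation and the $\Ext^2$-vanishing. The direct sum of these bundles is the tilting bundle, and this construction is exactly the source of the (strongly) quasi-hereditary structure with $\prdim_A\Delta_i\leq 1$ recorded in the Remark following Corollary \ref{C:Main}. In short: your reduction to minimal models and blow-ups is the right opening move, but preserving strongness through the blow-ups cannot be done within line bundles, and the missing idea is the passage to universal extensions.
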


In combination with Theorem \ref{T:Krah}, this has the following consequence.
\begin{cor}\label{C:Main}
There exists an associative finite dimensional $\C$-algebra $A$ of finite global dimension, such that the bounded derived category $D^b(A)$ of finite dimensional $A$-modules admits an admissible subcategory $\cp$ with vanishing Grothendieck group $K_0(\cp)$.
\end{cor}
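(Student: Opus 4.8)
The plan is to transport Krah's phantom from the geometric side to the algebraic side by means of a tilting equivalence, so that essentially all of the content is already contained in Theorem~\ref{T:Krah} and its corollary. Since $X$ is a smooth projective rational surface over the algebraically closed field $\C$, Theorem~\ref{T:HP} furnishes a tilting bundle $T$ on $X$. First I would set $A = \End_X(T)\op$ and recall the standard consequences of $T$ being tilting: the space $\Hom_X(T,T)$ is finite dimensional over $\C$ (as $T$ is a coherent sheaf on the projective variety $X$), so $A$ is a finite dimensional $\C$-algebra, and the derived functor
\[
\Phi = \RHom_X(T,-)\colon D^b(\Coh X) \iso D^b(\mod A)
\]
is an exact equivalence of triangulated categories, where $\mod A$ denotes the category of finite dimensional $A$-modules.

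Next I would verify that $A$ has finite global dimension. Because $X$ is smooth and projective, every bounded complex of coherent sheaves is perfect, that is $\per X = D^b(\Coh X)$. The functor $\Phi$ carries the vector bundle $T$ to the free module $A$, and hence identifies perfect complexes on $X$ with perfect complexes over $A$; combined with the previous identity, this gives $\per A = D^b(\mod A)$. For a finite dimensional algebra this equality is equivalent to $\gldim A < \infty$, so $A$ is of finite global dimension as required.

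It then remains to transport the phantom. Let $\ca = \langle E_1,\dots,E_{13}\rangle^\perp$ be the admissible phantom subcategory of $D^b(\Coh X)$ produced by Krah's corollary recalled above, and set $\cp = \Phi(\ca) \subseteq D^b(\mod A)$. Admissibility is a purely categorical property -- the inclusion of the subcategory admits both a left and a right adjoint -- and is therefore preserved under the equivalence $\Phi$, so $\cp$ is again admissible. Finally, any equivalence of triangulated categories induces an isomorphism on Grothendieck groups, whence $K_0(\cp) \cong K_0(\ca) = 0$, and $\cp$ is a phantom in $D^b(\mod A)$. The only step requiring genuine care is the finiteness of the global dimension of $A$; the preservation of admissibility and the invariance of $K_0$ are formal, so once Theorem~\ref{T:Krah} is available the corollary follows with no further geometric input.
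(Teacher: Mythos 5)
Your proposal is correct and takes essentially the same route as the paper: obtain a tilting bundle from Theorem~\ref{T:HP}, pass through the resulting tilting equivalence $D^b(\Coh X)\cong D^b(\End_X(\ct))$, and transport Krah's phantom, noting that admissibility and $K_0$ are preserved under triangle equivalences. The only difference is one of detail: the paper simply asserts that $\End_X(\ct)$ has finite global dimension because $X$ is smooth, whereas you justify this standard fact via the identification $\per A = D^b(\mod A)$.
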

\begin{proof}
Let $X$ be the smooth projective rational surface in Theorem \ref{T:Krah}. By Theorem \ref{T:HP}, $X$ has a tilting bundle\footnote{Alternatively, as Krah pointed out to me while I was preparing this note, $X$ even has a full \emph{strong} exceptional collection and thus a tilting object.}  $\ct$. Thus tilting theory yields a triangle equivalence
\begin{align}\label{E:Teq}
\Phi:=\Hom_{D^b(X)}(\ct, -) \colon D^b(\Coh X) \xrightarrow{\cong}ÊD^b(\End_X(\ct)), 
\end{align}
where $\End_X(\ct)$ is a finite dimensional algebra, which has finite global dimension since $X$ is smooth.
Then $\cp=\Phi(\ca)$ has the desired properties by Theorem \ref{T:Krah}.
\end{proof}

\begin{rem}
By \cite{HillePerling}, the algebra $A$ is strongly quasi-hereditary, indeed they show that the standard modules $\Delta_i$ satisfy $\Ext^{>1}_A(\Delta_i, \Delta_j)=0$ -- in particular, $\prdim_A \Delta_i \leq 1$ for all $i$.   
\end{rem}

\section{Presilting objects without completion}
We first introduce the relevant definitions.

\begin{defn}
Let $\cd$ be a triangulated category. An object $P \in \cd$ is called \emph{presilting} if
\begin{align}
\Hom_\cd(P, P[>0])=0.
\end{align}
A presilting object $S$ is called \emph{silting} if additionally
\begin{align}
\thick(S) = \cd.
\end{align}
A presilting object $C$ is called \emph{partial silting}, if there exists an object $C' \in \cd$ such that $C \oplus C'$ is a silting object.
\end{defn}

Recently, examples of presilting objects that are not partial silting have been discovered for derived categories of certain finite dimensional algebras (with silting objects) via partially wrapped Fukaya categories, cf.~\cite{LZ} and also \cite{JSW}. The key ingredient is a triangulated subcategory $\cu$ that appears in all these examples. It does not have any presilting objects and $K_0(\cu) \cong \Z^2$. Our example Corollary \ref{C:Main2} of a presilting object that cannot be completed to a silting object, builds on the existence of the phantom category $\cp$ in Corollary \ref{C:Main} -- in particular, it has a quite different flavour. 

More precisely, the examples in \cite{LZ} and \cite{JSW} are gentle algebras. In the proposition below, we collect some properties of presilting objects for this class of algebras. In particular, statement (a)  
shows that for gentle algebras, there are no phantom categories that are perpendicular to presilting objects, in contrast to the example in Corollary \ref{C:Main2}.

\begin{prop}
Let $A$ be a finite dimensional gentle algebra and let $r=\rk K_0(D^b(A))$. Let $S$ in $D^b(A)$ be a presilting object. 
\begin{itemize}
\item[(a)] If $S$ has $r$ non-isomorphic indecomposable direct summands then $S$ is silting.(cf. \cite[Prop. 3.7]{APS}).
\item[(b)] If $S$ has $r-1$ non-isomorphic indecomposable direct summands then $S$ is partial silting (cf. \cite[Prop. 4.15]{JSW} and also \cite[Prop. 1.4.]{LZ} in a special case).
\end{itemize}
\end{prop}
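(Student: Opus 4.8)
The plan is to reduce both statements to the combinatorics of curves on a surface, via the geometric model for the bounded derived category of a gentle algebra (due to Opper--Plamondon--Schroll). Under this model there is a graded marked surface whose graded arcs parametrise the indecomposable objects of $D^b(A)$, with graded intersection numbers computing the spaces $\Hom(X,Y[i])$ in all degrees $i$. In particular the presilting condition $\Hom(S,S[>0])=0$ translates into a \emph{non-crossing} (in positive degrees) condition on the associated collection of graded arcs, and silting objects correspond to \emph{full} formal graded arc systems. The number of arcs in a full system is a topological invariant of the surface, equal to the number of vertices of the gentle quiver, which is exactly $r=\rk K_0(D^b(A))$. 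First I would set up this dictionary precisely, so that a basic presilting object $S=\bigoplus_{i=1}^m S_i$ corresponds to a collection of $m$ pairwise non-crossing graded arcs.

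For part (a), the key point is that $r$ is the \emph{maximal} cardinality of a non-crossing arc system on the surface, and that any maximal such system is automatically full, i.e.\ cuts the surface into polygons. Hence $r$ pairwise non-crossing graded arcs necessarily form a full arc system, the corresponding $S$ generates $D^b(A)$, and $\thick(S)=D^b(A)$, so $S$ is silting. I would emphasise that this is precisely where gentleness is indispensable: for a general finite dimensional algebra a presilting object with $r$ indecomposable summands can fail to be silting, because a phantom may obstruct $\thick(S)=D^b(A)$ while leaving the $K_0$-count intact -- this is exactly the phantom phenomenon exhibited in Corollary~\ref{C:Main}, so a purely $K_0$- or $g$-vector-theoretic count cannot suffice. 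The surface model is what rules out such phantoms for gentle algebras.

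For part (b), starting from $r-1$ pairwise non-crossing graded arcs I would argue that the complement of the arcs in the surface contains exactly one region that is not a polygon, and that this region admits a single further arc non-crossing with all the given ones; this is a standard Euler-characteristic count for arc systems of corank one. Equipping this arc with a compatible grading yields an indecomposable $S'$ for which $S\oplus S'$ corresponds to a full graded arc system, hence is silting, so $S$ is partial silting. This is the completion argument underlying \cite[Prop.~4.15]{JSW} and, in a special case, \cite[Prop.~1.4.]{LZ}.

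The main obstacle is the bookkeeping of the grading throughout. Reading off ``presilting'' faithfully requires that the non-crossing condition be controlled by graded, rather than merely topological, intersection numbers; and in part (b) the delicate point is to verify that the completing arc can be assigned a grading for which $\Hom(S\oplus S',(S\oplus S')[>0])=0$ actually holds, since topological completeness of the underlying arc system does not by itself guarantee the existence of such a grading. Establishing this graded completion, together with the precise equivalences ``non-crossing $\Leftrightarrow$ presilting'' and ``full $\Leftrightarrow$ silting'', is the technical heart of the argument, and is what the cited results \cite{APS, JSW, LZ} provide.
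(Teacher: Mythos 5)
Your proposal takes essentially the same route as the paper: the paper itself gives no argument for this proposition, stating it as a collection of known results with citations to \cite[Prop. 3.7]{APS}, \cite[Prop. 4.15]{JSW} and \cite[Prop. 1.4.]{LZ}, and the surface-model strategy you sketch (presilting $\leftrightarrow$ non-crossing graded arc systems, silting $\leftrightarrow$ full arc systems, a maximality count for (a), graded one-arc completion for (b)) is precisely the strategy of those cited works. Like the paper, you ultimately defer the technical heart --- the exact graded dictionary and the existence of a compatible grading on the completing arc --- to the same references, so the two treatments agree.
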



\begin{ex}\label{Ex:ExcSeq}
Let $\cd$ be a proper\footnote{That is, $\dim_k \bigoplus_{i \in \Z} \Hom_\cd(X, Y[i])<\infty$ for all $X, Y$ in $\cd$. For example, this holds for $\cd=D^b(\Coh X)$ for a smooth projective variety $X$ over $\C$ and for $\cd=D^b(B)$ for a finite dimensional $\C$-algebra $B$ of finite global dimension.} $k$-linear triangulated category and assume that $E_1, \ldots, E_t \in \cd$ is an exceptional sequence, i.e.
\begin{itemize}
\item[(a)] $\Hom_\cd(E_i, E_j[\Z])=0$ for all $i>j$.
\item[(b)] $\Hom_\cd(E_i, E_i[\neq 0])=0$ and $\Hom_\cd(E_i, E_i)=\C$ for all $i$.
\end{itemize}
Then $\cd$ contains a presilting object. 

Indeed, since $\cd$ is proper, for every $i$ there is a maximal $s_i \in \Z$ with $\Hom_\cd(\bigoplus_j E_j, E_i[s_i]) \neq 0$. By property (a) and by taking appropriate shifts of the $E_i$, we can assume that all these $s_i\leq 0$. Then $\bigoplus_j E_j$ is silting in $\thick(E_1, \ldots, E_t)$ and presilting in $\cd$. It is a silting object in $\cd$ if and only if the exceptional sequence is \emph{full}, i.e. $\thick(E_1, \ldots, E_t)=\cd$.
\end{ex}

Applying this example to the exceptional sequence for the smooth surface $X$ in Theorem \ref{T:Krah} and the corresponding exceptional sequence for the finite dimensional $\C$-algebra $A$ in Corollary \ref{C:Main} yields presilting objects in $D^b(\Coh X)$ and $D^b(A)$ that are \emph{not} silting. We now observe that these presilting objects cannot be completed to silting objects, i.e. they are \emph{not} partial silting. 

We need the following result of Aihara \& Iyama \cite[Thm 2.27 \& Cor 2.28]{AiharaIyama}.
\begin{thm}\label{t:AI}
Let $\cd$ be a  Krull--Schmidt triangulated category with a silting object. Then $K_0(\cd)\cong \Z^r$ and all silting objects in $\cd$ have precisely $r$ non-isomorphic indecomposable direct summands.
\end{thm}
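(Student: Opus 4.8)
The plan is to reduce both assertions to the single statement that if $M = M_1 \oplus \cdots \oplus M_n$ is any silting object in $\cd$, written with pairwise non-isomorphic indecomposable summands $M_i$ (which make sense since $\cd$ is Krull--Schmidt), then $[M_1], \ldots, [M_n]$ is a $\Z$-basis of $K_0(\cd)$. Granting this, $K_0(\cd) \cong \Z^n$ is free of rank $r := n$; since the rank of a free abelian group is an invariant of the group, any other silting object must likewise have exactly $r$ non-isomorphic indecomposable summands. This gives both conclusions at once.

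Generation is formal. The full subcategory of those $X \in \cd$ with $[X] \in \sum_i \Z[M_i]$ is closed under shifts (as $[X[1]] = -[X]$), under cones (a triangle $X \to Y \to Z \to X[1]$ forces $[Z] = [Y]-[X]$), and under direct summands (by additivity of $[-]$ together with uniqueness of Krull--Schmidt decompositions). As it contains $\add(M)$ and $\thick(M)=\cd$, it must equal $\cd$, so the classes $[M_i]$ generate $K_0(\cd)$.

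The content is linear independence, for which I would invoke the bounded co-t-structure (weight structure) attached to a silting object. Using only $\Hom_\cd(M,M[>0])=0$ together with $\thick(M)=\cd$, one checks that the subcategories generated under extensions and summands by the shifts $M[\leq 0]$ and $M[\geq 0]$ form a bounded weight structure on $\cd$ with coheart $\add(M)$: every $X$ sits in a finite tower of triangles with graded pieces in the $\add(M[i])$, and the presilting vanishing supplies the $\Hom$-orthogonality of the two aisles. Bondarko's weight complex functor then gives a (weakly exact) functor $\cd \to K^b(\add M)$ that restricts to the identity on $\add M$ and induces an isomorphism $K_0(\cd) \xrightarrow{\sim} K_0(K^b(\add M))$. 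Finally, the Euler-characteristic map $[X^\bullet] \mapsto \sum_i (-1)^i [X^i]$ identifies $K_0(K^b(\add M))$ with the split Grothendieck group of $\add M$, which is free on the indecomposables $M_1, \ldots, M_n$; tracing through, the $[M_i]$ are sent to the standard basis and are therefore independent.

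The main obstacle is exactly the construction and verification of this weight structure -- checking the $\Hom$-orthogonality of the two aisles in the correct degrees, which is where the presilting hypothesis is consumed, and checking boundedness, where $\thick(M)=\cd$ enters. A more hands-on alternative, avoiding the weight-complex machinery, is to read off from the weight filtration of $X$ its \emph{index} or \emph{g-vector}, i.e.\ the $n$-tuple recording the signed multiplicities of $M_1, \ldots, M_n$ in the filtration; one argues these are well defined on $K_0(\cd)$, producing $n$ additive functionals $K_0(\cd)\to\Z$ whose values on the $[M_j]$ form the identity matrix, which again forces independence. Either route reduces the theorem to the basis statement, whence the conclusion is immediate.
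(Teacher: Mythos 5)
First, note that the paper does not prove this statement at all: it is quoted from Aihara and Iyama (their Thm.~2.27 and Cor.~2.28 in \cite{AiharaIyama}), so the only proof to compare against is theirs. Your overall strategy --- reduce both assertions to the claim that the indecomposable summands of one silting object $M$ give a $\Z$-basis of $K_0(\cd)$, and obtain this from the bounded co-$t$-structure (weight structure) with coheart $\add M$ --- is in substance exactly the original argument: Aihara--Iyama construct, for every object of $\cd$, a finite tower of triangles with pieces in the various $\add M[i]$ (their star-product decomposition of $\cd$), and check via the octahedral axiom that the alternating sum of the pieces is well defined on $K_0$, yielding $K_0(\cd)\cong K_0^{split}(\add M)$. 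Bondarko's weight-structure formalism is a packaged form of the same construction, and you correctly identify where the presilting vanishing and $\thick(M)=\cd$ are consumed. One refinement: for the $K_0$ statement you should avoid invoking the \emph{strong} weight complex functor (whose construction has well-known technical subtleties requiring enhancements); Bondarko's direct induction on weight length, or equivalently the well-definedness of the Euler characteristic of a weight decomposition, is all that is needed.

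There is, however, a genuine error in your ``generation is formal'' step. The full subcategory $\{X : [X]\in H\}$, where $H=\sum_i \Z[M_i]$, is indeed closed under shifts and cones, but it is \emph{not} closed under direct summands, and ``additivity plus Krull--Schmidt'' does not give this: knowing $[Y]+[Z]\in H$ says nothing about $[Y]$ alone. Concretely, in $D^b(k)$ with $H=2\Z\subset\Z=K_0(D^b(k))$, the object $k\oplus k[1]$ has class $0\in H$ while its summand $k$ has class $1\notin H$; this subcategory is triangulated and dense but not thick. This is precisely the phenomenon behind Thomason's classification of dense triangulated subcategories: $\thick(M)=\cd$ alone does not formally force the classes of summands of objects of $\tria(M)$ to generate $K_0(\cd)$, so surjectivity genuinely requires structural input. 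Fortunately the gap is harmless for your proof as a whole: once the bounded weight structure is established, boundedness hands every object a finite filtration with subquotients in $\add M[i]$, so $[X]\in H$ follows directly and the formal argument is not needed. You should delete it and derive generation, like independence, from the weight decomposition --- which is exactly what Aihara--Iyama do.
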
 

In combination with Example \ref{Ex:ExcSeq}, we obtain the following. 

\begin{lem}\label{L:PreNoPart}
Let $\cd$ be a proper $k$-linear Krull--Schmidt triangulated category with a silting object. If there exists a non-full exceptional sequence $\mathbb{M}$ of length $\rk K_0(\cd)$, then there exists a presilting object that is \emph{not} partial silting.
\end{lem}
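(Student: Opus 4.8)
The plan is to feed the non-full exceptional sequence $\mathbb{M} = (E_1, \ldots, E_r)$, where $r = \rk K_0(\cd)$, into the construction of Example \ref{Ex:ExcSeq} and then obstruct any completion by a summand count. First I would apply Example \ref{Ex:ExcSeq}: after replacing each $E_i$ by a suitable shift, the object $S := \bigoplus_{j} E_j$ is presilting in $\cd$ and silting in $\thick(E_1, \ldots, E_r)$. Since $\mathbb{M}$ is assumed to be \emph{not} full, that is $\thick(E_1, \ldots, E_r) \neq \cd$, the same example tells us that $S$ is \emph{not} silting in $\cd$. The goal is then to show that this $S$ is not partial silting.

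The key structural observation is that $S$ has exactly $r$ non-isomorphic indecomposable direct summands. Each $E_i$ is indecomposable because $\End_\cd(E_i) = k$ is a division ring, hence local; and the $E_i$ are pairwise non-isomorphic, since $E_i \cong E_j$ with $i > j$ would give $\Hom_\cd(E_i, E_j[\Z]) \supseteq \Hom_\cd(E_i, E_i) = k \neq 0$, contradicting the orthogonality property (a) of an exceptional sequence. Hence $E_1, \ldots, E_r$ are precisely the distinct indecomposable summands of $S$, and there are exactly $r$ of them.

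Next I would argue by contradiction. Suppose $S$ is partial silting, so there is an object $S'$ with $S \oplus S'$ silting. By Theorem \ref{t:AI} (Aihara--Iyama), every silting object in $\cd$ has exactly $r = \rk K_0(\cd)$ non-isomorphic indecomposable direct summands. But $S$ already contributes $r$ pairwise non-isomorphic indecomposables, so (using that $\cd$ is Krull--Schmidt) every indecomposable summand of $S'$ must already occur as a summand of $S$. Therefore $\add(S \oplus S') = \add(S)$, and in particular $\thick(S \oplus S') = \thick(S)$. Since $S \oplus S'$ is silting, $\thick(S) = \thick(S \oplus S') = \cd$, so $S$ is silting --- contradicting that $\mathbb{M}$ is non-full. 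Hence $S$ is not partial silting, as claimed.

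The main obstacle is really the summand count in the third paragraph, which is where both hypotheses are used crucially: Theorem \ref{t:AI} pins the number of indecomposable summands of any silting object at exactly $r$, while the length of $\mathbb{M}$ is also exactly $r$, so there is no room for a nontrivial completion $S'$. If the exceptional sequence had length $< r$ this argument would break down (indeed one then typically \emph{can} complete), so the equality $\mathrm{length}(\mathbb{M}) = \rk K_0(\cd)$ is essential rather than incidental.
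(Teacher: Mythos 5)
Your proof is correct and follows essentially the same route as the paper: feed $\mathbb{M}$ into Example \ref{Ex:ExcSeq} to obtain a presilting object with $\rk K_0(\cd)$ non-isomorphic indecomposable summands, then invoke Theorem \ref{t:AI} to rule out any completion to a silting object. You additionally spell out two points the paper's terser proof leaves implicit --- that the $E_i$ are indecomposable with pairwise distinct isomorphism classes, and that a complement $S'$ all of whose indecomposable summands already lie in $\add(S)$ cannot make $S \oplus S'$ silting because then $\thick(S \oplus S') = \thick(S) \neq \cd$ --- which makes your write-up slightly more complete than the original.
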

\begin{proof}
By Example \ref{Ex:ExcSeq}, every non-full exceptional sequence $\mathbb{E}$ in $D^b(A)$ yields a presilting object $P(\mathbb{E})$ that is \emph{not} silting. By construction, the number of non-isomorphic indecomposable direct summands of $P(\mathbb{E})$ equals the length of $\mathbb{E}$. In particular, the presilting object $P(\mathbb{M})$ has already $\rk K_0(\cd)$ non-isomorphic indecomposable direct summands. So there cannot be an object $C$ in $\cd$ such that $P(\mathbb{M}) \oplus C$ is silting by Theorem \ref{t:AI}.
\end{proof}

\begin{cor}\label{C:Main2}
Let $X$ and $A$ be as in Theorem \ref{T:Krah} and Corollary \ref{C:Main}, respectively. Then $D^b(A)$ and $D^b(\Coh X)$ have silting objects. Moreover, they contain presilting objects that are \emph{not} partial silting. 
\end{cor}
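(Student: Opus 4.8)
The plan is to deduce Corollary~\ref{C:Main2} directly from the machinery already assembled, treating the two categories $\cd=D^b(\Coh X)$ and $\cd=D^b(A)$ in parallel via the equivalence $\Phi$ of \eqref{E:Teq}. First I would verify that each of these categories satisfies the hypotheses of Lemma~\ref{L:PreNoPart}: namely that $\cd$ is a proper, $k$-linear, Krull--Schmidt triangulated category admitting a silting object, and that it contains a non-full exceptional sequence of length $\rk K_0(\cd)$. Both categories are $\C$-linear; properness of $D^b(\Coh X)$ holds because $X$ is smooth projective over $\C$, and properness of $D^b(A)$ holds because $A$ is finite dimensional of finite global dimension (both cases are exactly the examples flagged in the footnote to Example~\ref{Ex:ExcSeq}). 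The Krull--Schmidt property likewise holds for bounded derived categories of these proper situations.

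Next I would produce the silting object and the non-full exceptional sequence. By Theorem~\ref{T:HP} the surface $X$ carries a tilting bundle $\ct$ (equivalently, the footnote's full strong exceptional collection), so $\Phi(\ct)=\End_X(\ct)$ is a silting object in $D^b(A)$ and hence $\ct$ is one in $D^b(\Coh X)$; this gives the first assertion that both categories have silting objects. For the exceptional sequence, Theorem~\ref{T:Krah} supplies a non-full exceptional sequence $(E_1,\ldots,E_{13})$ in $D^b(\Coh X)$, and since $K_0(X)\cong\Z^{13}$ its length equals $\rk K_0(D^b(\Coh X))=13$. Applying the equivalence $\Phi$ transports this to a non-full exceptional sequence of the same length in $D^b(A)$, because $\Phi$ preserves $\Hom$-spaces, shifts, the exceptional-sequence conditions (a),(b) of Example~\ref{Ex:ExcSeq}, admissibility, and hence non-fullness; and $\rk K_0(D^b(A))=13$ as well since the equivalence induces an isomorphism on Grothendieck groups.

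With both hypotheses in hand, Lemma~\ref{L:PreNoPart} applies verbatim to each category and yields a presilting object that is not partial silting, completing the proof. Concretely, the lemma takes the non-full exceptional sequence $\mathbb{M}$ of full length and, via the construction of Example~\ref{Ex:ExcSeq}, builds the presilting object $P(\mathbb{M})$; because $P(\mathbb{M})$ already has $\rk K_0(\cd)$ non-isomorphic indecomposable summands, Theorem~\ref{t:AI} forbids completing it to a silting object.

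I expect the only genuine subtlety to lie in step two: checking that the triangle equivalence $\Phi$ really carries \emph{all} the structure we need from $D^b(\Coh X)$ to $D^b(A)$, in particular that it sends a non-full exceptional sequence of length $13$ to another one of the same length and that the Grothendieck-group ranks match so that ``length $=\rk K_0$'' survives the transport. Everything else is a matter of confirming that the standing hypotheses (properness, Krull--Schmidt, existence of silting) hold for the two concrete categories, which follows from smoothness of $X$ and finiteness of $\gldim A$; these are routine given the results already cited, so no lengthy computation should be required.
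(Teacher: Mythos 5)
Your proposal is correct and follows essentially the same route as the paper: both verify the hypotheses of Lemma~\ref{L:PreNoPart} (properness and the Krull--Schmidt property of the category, a silting object coming from the tilting bundle of Theorem~\ref{T:HP}, and Krah's non-full exceptional sequence of length $13=\rk K_0$ as $\mathbb{M}$) and then apply that lemma together with Theorem~\ref{t:AI}. The only difference is organizational: the paper uses the equivalence $\Phi$ of \eqref{E:Teq} to reduce all checks to $D^b(\Coh X)$ alone, whereas you verify both categories in parallel and transport the exceptional sequence across $\Phi$ explicitly --- the ``subtlety'' you flag is exactly what the paper's one-line reduction disposes of, since a triangle equivalence preserves all the structure in question.
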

\begin{proof} We show that the assumptions in Lemma \ref{L:PreNoPart} are met. By the triangle equivalence $\Phi$ in \eqref{E:Teq} it is enough to give the argument for $D^b(\Coh X)$.
The $\C$-linear category $D^b(\Coh X)$ is Krull--Schmidt, since it is Hom-finite and idempotent complete.
Since $X$ is smooth, $D^b(\Coh X)$ is proper and the tilting object $\ct$ in Theorem \ref{T:HP} is silting. Finally, we can use the non-full exceptional sequence from Theorem \ref{T:Krah} as $\mathbb{M}$.
\end{proof}

\end{document}